\theoremstyle{definition}
\newtheorem{defi}{Definition}[section]
\newtheorem{rem}[defi]{Remark}
\newtheorem{prop}[defi]{Proposition}
\newtheorem{cor}[defi]{Corollary}
\newtheorem{lem}[defi]{Lemma}
\newtheorem{theo}[defi]{Theorem}
\newtheorem{con}[defi]{Conjecture}
\newtheorem{fact}[defi]{Fact}
\title{Freehedra are short}
\author{Daria Poliakova}
\newcommand{\F}{\mathcal{F}}
\begin{document}
\maketitle
 \begin{abstract}
We prove the combinatorial property of shortness for freehedra. Note that associahedra, a sibling family of polytopes, are not short.
\end{abstract}

\section*{Introduction}
In this paper we prove that freehedra of \cite{San} (see also \cite{RS}) are short, which confirms the hope that their cellular complexes are integrated $A_\infty$-coalgebras as in \cite{AP}. Compare: in Section 6.2 of \cite{San} Samson Saneblidze states that an explicit $A_\infty$-extension of his cellular diagonal is expected. \\

Let us provide some context. In \cite{ACD}, tensor products for representations up to homotopy and for their morphisms were constructed, but for morphisms, those tensor products were neither strictly associative nor consistent with compositions. The description of the full coherent structure was left as an open question. In \cite{Pol} the author has demonstrated that questions about tensoring morphisms of representations up to homotopy can be reformulated in terms of polytopal diagonals, and the corresponding polytopes are the freehedra. In \cite{AP}, for any directed polytope a colored operad is constructed, with colors enumerated by all faces of the polytope. For sufficiently nice (namely, {\em short}) polytopes, the Poincar\'e-Hilbert series encodes a family of operations that conjecturally assemble into the structure of an {\em integrated $A_\infty$-coalgebra} (over $\mathbb{F}_2$, i.e. signs are not established). There is a forgetful functor from integrated $A_\infty$-coalgebras to usual $A_\infty$-coalgebras. \\

The main drama of \cite{AP} was that associahedra failed to be short, and it was not obvious that operadically meaningful short polytopes with non-coassociative diagonals exist at all. The present paper provides an example.\\

We now outline the structure of the paper. In Section 1, we remind the construction of freehedra. In Section 2, we explain the construction of an operad associated to a directed polytope, and explain shortness. In Section 3, we prove that freehedra are short.\\

\subsection*{Acknowledgements} We are grateful to Sergey Arkhipov for supporting the current work, and to Samson Saneblidze for previously teaching us some combinatorics of freehedra.
\section{Freehedra}

Freehedra (aka Hochschild polytopes) were introduced by Saneblidze as subdivisions of cubes. In \cite{Pol}, the author has shown that forest-tree-forest triples of \cite{ACD} provide a set of labels for the faces of freehedra, consistent with their cellular structure. In this section we remind both approaches, with specific attention to vertices.\\

\subsection{Subdivisions of cubes and coordinates}

Freehedra are defined as subdivisions of cubes inductively, where at each step the newly constructed freehedron $\F^n$ comes with a distinguished facet $X_n$. 

\begin{defi}
\label{free}
We set $\mathcal{F}^0$ to be the point, and we set $\mathcal{F}^1$ to be the interval $[0,2]$ with distinguished vertex $X^1$ = $2$. Now assume $\mathcal{F}^{n-1}$ and its distinguished facet $X^{n-1}$ are defined. Consider $F^{n-1} \times [0,2]$, and split its facet $X^{n-1} \times [0,2]$ vertically into $X^{n-1} \times [0,1]$ and $X^{n-1} \times [1,2]$. This is $\mathcal{F}^n$. Set $X^n = X^{n-1} \times [1,2]$.
\end{defi}

The figure below features $\F^1$, $\F^2$ and $\F^3$, with their distinguished facets highlighted red.

\begin{center}
\begin{tikzpicture}
\draw[thin] (0,0) -- (3,0);
\filldraw[black] (0,0) circle (1.5pt);
\node[anchor = south] at (0,0) {0};
\filldraw[red] (3,0) circle (1.5pt);
\node[anchor = south] at (3,0) {2};

\draw[thin] (3+4,0) -- (0+4,0) -- (0+4,-3) -- (3+4,-3) -- (3+4,-1.5);
\draw[red] (3+4,-1.5) -- (3+4,-0);
\filldraw[black] (0+4,0) circle (1.5pt);
\node[anchor = south] at (4,0) {02};

\filldraw[black] (0+4,-3) circle (1.5pt);
\node[anchor = north] at (4,-3) {00};

\filldraw[red] (3+4,0) circle (1.5pt);
\node[anchor = south] at (7,0) {22};

\filldraw[red] (3+4,-1.5) circle (1.5pt);
\node[anchor = east] at (7,-1.5) {21};

\filldraw[black] (3+4,-3) circle (1.5pt);
\node[anchor = north] at (7,-3) {20};

\fill[gray, opacity = 0.3] (0+4,0) -- (3+4,0) -- (3+4,-3) -- (0+4,-3) -- cycle;

\draw[thin] (8,0) rectangle (11,-3);
\draw[dashed] (8,-3) -- (9,-2) -- (12,-2);
\draw[dashed] (9,-2) -- (9,1);
\filldraw[black] (8,0) circle (1.5pt);
\node[anchor = south] at (7.9,0.1) {002};

\filldraw[black] (8,-3) circle (1.5pt);
\node[anchor = north] at (8,-3) {202};

\filldraw[black] (11,0) circle (1.5pt);
\node[anchor = south east] at (11,0) {202};

\filldraw[black] (11,-3) circle (1.5pt);
\node[anchor = north] at (11,-3) {200};

\filldraw[black] (9,1) circle (1.5pt);
\node[anchor = south] at (9,1) {022};

\filldraw[black] (9,-2) circle (1.5pt);
\node[anchor = north west] at (9,-2) {020};

\filldraw[black] (11.5,-2.5) circle (1.5pt);
\node[anchor = north west] at (11.5,-2.5) {210};

\filldraw[black] (12,-2) circle (1.5pt);
\node[anchor = north west] at (12,-2) {220};

\fill[gray,opacity = 0.3] (8,0) rectangle (11,-3);
\fill[gray,opacity = 0.1] (8,0) -- (9,1) -- (12,1) -- (11,0);
\fill[gray,opacity = 0.5] (11,-3) -- (11,0) -- (11.5,0.5) -- (11.5,-2.5) -- cycle;
\fill[gray,opacity = 0.5] (11.5,-2.5) -- (11.5,-1) -- (12,-0.5) -- (12,-2) -- cycle;

\draw[thin] (8,0) -- (9,1) -- (12,1);
\draw[thin] (11,0) -- (11.5,0.5);
\draw[red] (11.5,0.5) -- (12,1) -- (12,1-1.5) --(11.5,-1) -- cycle;
\fill[red, opacity = 0.5] (11.5,0.5) -- (12,1) -- (12,1-1.5) --(11.5,-1) -- cycle;
\draw[thin] (11,-3) -- (11.5,-2.5) -- (11.5,-1);
\draw[thin] (11.5,-2.5) -- (12,-2) -- (12,-0.5);

\filldraw[red] (12,1) circle (1.5pt);
\node[anchor = south] at (12,1) {222};

\filldraw[red] (12,-0.5) circle (1.5pt);
\node[anchor = west] at (12,-0.5) {221};

\filldraw[red] (11.5,0.5) circle (1.5pt);
\node[anchor = west] at (11.5,0.5) {212};

\filldraw[red] (11.5,-1) circle (1.5pt);
\node[anchor = west] at (11.5,-1) {211};

\end{tikzpicture}
\end{center}

\begin{rem}
This is Definition 4.2 from \cite{Pol}, scaled by 2 for the simplicity of notation. Saneblidze does not use this scaling in [San].
\end{rem}

\begin{prop}
Vertices of $\F^n$ correspond to all coordinate $n$-tuples $v$ with entries are $0$, $1$ or $2$, subject to condition $\bigstar$: $v_i = 1$ implies $i>1$ and $v_j \neq 0$ for $j < i$.
\end{prop}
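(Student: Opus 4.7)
The plan is to proceed by induction on $n$, reading off Definition \ref{free} directly. The base cases $n = 0$ (empty tuple, vacuously satisfying $\bigstar$, matching the single vertex of the point) and $n = 1$ (vertices $0, 2$, matching the fact that $v_1 = 1$ is forbidden since $1 \not> 1$) are immediate.

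For the inductive step, I would identify the newly adjoined $[0,2]$-coordinate with the last position $v_n$, which matches the labeling convention in the figures for $\F^2$ and $\F^3$. Then Definition \ref{free} sorts the vertices of $\F^n$ into three disjoint classes: pairs $(w, 0)$ and $(w, 2)$ with $w$ a vertex of $\F^{n-1}$ (the top and bottom copies of $\F^{n-1}$ in the cylinder $\F^{n-1} \times [0, 2]$), together with new vertices $(w, 1)$ produced by the vertical split, where $w$ now ranges only over vertices of the distinguished facet $X^{n-1}$. For the first two classes, $v_n \in \{0, 2\}$ triggers no condition at position $n$, so the inductive hypothesis applied to $w$ gives exactly the allowed tuples with $v_n \neq 1$. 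For the third class, $\bigstar$ at position $n$ demands in addition that $v_1, \ldots, v_{n-1}$ all be nonzero, so the matching reduces to an auxiliary claim about $X^{n-1}$.

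That auxiliary claim---proved by a parallel induction using $X^m = X^{m-1} \times [1, 2]$ with $X^1 = \{2\}$---is that the vertices of $X^m$ are precisely the tuples $(2, t_2, \ldots, t_m)$ with $t_j \in \{1, 2\}$, equivalently the vertices of $\F^m$ having no zero entries. Given this, the $(w, 1)$ class contributes exactly the $\bigstar$-tuples with $v_n = 1$. For the converse direction, any $\bigstar$-compliant tuple $v$ is sorted by the value of $v_n$ into one of the three classes, the case $v_n = 1$ forcing the prefix to be zero-free and hence (by the auxiliary claim) a vertex of $X^{n-1}$.

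There is no serious obstacle here; the entire argument is careful bookkeeping of the inductive construction. The one point requiring vigilance is to keep straight which coordinate is identified with the newly adjoined $[0,2]$-factor at each step, and to note that no $(w, 1)$-vertex is ever created at $n = 1$, which is exactly the role of the clause $i > 1$ in $\bigstar$.
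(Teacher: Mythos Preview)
Your argument is correct and is genuinely different from the paper's. Both proofs begin the same way: an inductive description of how vertices of $\F^n$ arise from those of $\F^{n-1}$, together with an auxiliary induction on the distinguished facet $X^m$. The paper only proves that vertices of $X^m$ have no zero coordinates; you go further and identify them exactly as the tuples $(2,t_2,\ldots,t_m)$ with $t_j\in\{1,2\}$. That sharpening is what lets you run the converse direction (every $\bigstar$-tuple is a vertex) directly by induction, sorting on $v_n$. The paper instead handles the converse by a counting argument: it shows there are $2^{n-2}(n+3)$ $\bigstar$-tuples and then quotes the vertex count of $\F^n$ from an external reference (Chapoton). Your route is therefore more self-contained and avoids the citation; the paper's route, while less elementary, has the minor side benefit of re-deriving the closed-form vertex count along the way.
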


\begin{proof}
We first show by induction that vertices in distinguished facets do not have $0$ coordinates. This holds for $X^1 = 2$. Now assume that this is known for $X^{n-1}$. By by Definition \ref{free}, the coordinates $n$-tuples for vertices in $X^n$ are of the form $(v,1)$ and $(v,2)$ where $v$ is a coordinate $n$-tuple for the vertex of $X^n$. By inductive assumption $v$ does not have zero entries, so neither do  $(v,1)$ and $(v,2)$.\\

We now show by induction that all vertices of freehedra have coordinate $n$-tuples satisfying $\bigstar$. This holds for $\F^1$. Now assume that this is known for $\F^{n-1}$. Vertices of $\F^n$ are vertices of $\F^{n-1}$ with $0$, $1$ or $2$ appended in the end, and $1$ can only be appended if the respective vertex of $\F^{n-1}$ was inside $X^{n-1}$. By the argument above, this vertex did not have any $0$ coordinates, so the newly formed vertex also does not have them. And certainly appending a number does not change the $v_0$ which is not allowed to be 1. \\

Finally, to show that any such $n$-tuple corresponds to a vertex of freehedra, we inductively show that there are $2^{n-2}(n+3)$ such $n$-tuples, which is the number of vertices in $\F^n$ by Proposition 8.19 in (Chapoton). The base is the case of $n=1$: indeed, $2^{-1}(1+3)=2$. Now assume that for $n$ the result is proved. Note that any $\bigstar$-sequence of length $n+1$ can be obtained from a $\bigstar$-sequence of length $n$ by appending a number. If this shorter sequence has no $0$ entries, this number can be $0$, $1$ or $2$; otherwise it can be only $0$ or $2$. Now the number of $\bigstar$-sequences of length $n$ with no $0$ entries is $2^{n-1}$, so, by inductive assumption, the number of other $\bigstar$-sequences is $2^{n-2}(n+3)-2^{n-1} = 2^{n-2}(n+1)$. Then the number of $\bigstar$-sequences of length $n+1$ is

$$ 2^{n-1} \times 3 + 2^{n-2}(n-1) \times 2 = 2^{(n+1)-2}((n+1)+3)$$
\end{proof}

\begin{rem}
In [Cha] and [Com], freehedra (under the name of Hochshild polytopes) appear in a slightly different realization where vertices also correspond to words in the alphabet \{0,1,2\}. The difference is seen by comparing Figure 2.1 in [Com] to our figure above.
\end{rem}

\subsection{Forest-tree-forest triples}

We now remind forest-tree-forest triples of \cite{ACD}.

\begin{defi}
A short forest is a sequence of nonempty planar trees of depth 2. Inner edges are called branches and outer edges are called leaves.
\end{defi}
\begin{defi}
A forest-tree-forest triple is the data of one left short forest (possibly empty), one middle lone standing tree (possibly empty), and one right short forest (possibly empty).
\end{defi}

Below is en example of such a triple, where left forest has two trees and right forest is empty.

\[ \left ( \vcenter{\hbox{ \begin{forest}
for tree = {grow'=90,circle, fill, minimum width = 4pt, inner sep = 0pt, l sep = 0, s sep = 13pt, l-=5mm}
[{},phantom
[{},name = one [[]] ]
[{},name = two [[][]] [[]] ]
]
\draw[black] (one) -- (two);
\end{forest},
\begin{forest}
for tree = {grow'=90,circle, fill, minimum width = 4pt, inner sep = 0pt, s sep = 13pt, l sep = 0, l-=5mm}
[{},phantom
[ [[]] [[]]  ]
]
\end{forest}, 1  }} \right ) \]

By a {\em mid-branch space} we mean a couple of neighbouring branches that belong to the same tree. To compute the dimension of a forest-tree-forest triple, we count the number of mid-branch spaces, and add 1 if the middle tree is nonempty. In the example above, there is one mid-branch space in the second tree of the left forest and one mid-branch in the middle tree which is nonempty, so the dimension is 2+1 = 3.\\

\begin{defi} The {\em face transformations} for forest-tree-forest triples are the following:

\begin{itemize}
    \item {\sc (Merge) }for a mid-branch space in any tree (including the middle one), replace the two branches by one branch having the leaves of both:
    
    \[ \left ( \vcenter{\hbox{ \begin{forest}
for tree = {grow'=90,circle, fill, minimum width = 4pt, inner sep = 0pt, l sep = 0, s sep = 13pt, l-=5mm}
[{},phantom
[{},name = one [[]] ]
[{},name = two, for tree = {fill = red, edge = {color = red}} [[][]] [[]] ]
]
\draw[black] (one) -- (two);
\end{forest},
\begin{forest}
for tree = {grow'=90,circle, fill, minimum width = 4pt, inner sep = 0pt, s sep = 13pt, l sep = 0, l-=5mm}
[{},phantom
[ [[]] [[]]  ]
]
\end{forest}, 1  }} \right )  \longrightarrow 
\left ( \vcenter{\hbox{ \begin{forest}
for tree = {grow'=90,circle, fill, minimum width = 4pt, inner sep = 0pt, l sep = 0, s sep = 13pt, l-=5mm}
[{},phantom
[{},name = one [[]] ]
[{},name = two, for tree = {fill = teal, edge = {color = teal}}  [[][][]] ]
]
\draw[black] (one) -- (two);
\end{forest},
\begin{forest}
for tree = {grow'=90,circle, fill, minimum width = 4pt, inner sep = 0pt, s sep = 13pt, l sep = 0, l-=5mm}
[{},phantom
[ [[]] [[]]  ]
]
\end{forest}, 1  }} \right )
\] 
    
    \item {\sc (Push apart) }for a mid-branch space in any non-middle tree, split the tree in two:
    
    \[ \left ( \vcenter{\hbox{ \begin{forest}
for tree = {grow'=90,circle, fill, minimum width = 4pt, inner sep = 0pt, l sep = 0, s sep = 13pt, l-=5mm}
[{},phantom
[{},name = one [[]] ]
[{},name = two, for tree = {fill = red, edge = {color = red}} [[][]] [[]] ]
]
\draw[black] (one) -- (two);
\end{forest},
\begin{forest}
for tree = {grow'=90,circle, fill, minimum width = 4pt, inner sep = 0pt, s sep = 13pt, l sep = 0, l-=5mm}
[{},phantom
[ [[]] [[]]  ]
]
\end{forest}, 1  }} \right )  \longrightarrow 
\left ( \vcenter{\hbox{ \begin{forest}
for tree = {grow'=90,circle, fill, minimum width = 4pt, inner sep = 0pt, l sep = 0, s sep = 13pt, l-=5mm}
[{},phantom
[{},name = one [[]] ]
[{},name = two, for tree = {fill = teal, edge = {color = teal}}  [[][]] ]
[{},name = three, for tree = {fill = teal, edge = {color = teal}}  [[]] ]
]
\draw[black] (one) -- (two);
\draw[teal] (two) -- (three);
\end{forest},
\begin{forest}
for tree = {grow'=90,circle, fill, minimum width = 4pt, inner sep = 0pt, s sep = 13pt, l sep = 0, l-=5mm}
[{},phantom
[ [[]] [[]]  ]
]
\end{forest}, 1  }} \right )
\] 

    \item {\sc (Move left) } for any number of branches in the middle tree counting from the left, remove them from the middle tree and add to the left forest as a separate rightmost tree:
    
\[ \left ( \vcenter{\hbox{ \begin{forest}
for tree = {grow'=90,circle, fill, minimum width = 4pt, inner sep = 0pt, l sep = 0, s sep = 13pt, l-=5mm}
[{},phantom
[{},name = one [[]] ]
[{},name = two [[][]] [[]] ]
]
\draw[black] (one) -- (two);
\end{forest},
\begin{forest}
for tree = {grow'=90,circle, fill, minimum width = 4pt, inner sep = 0pt, s sep = 13pt, l sep = 0, l-=5mm}
[{},phantom
[ [{},  for tree = {fill = red, edge = {color = red}} []] [[]]  ]
]
\end{forest}, 1  }} \right )  \longrightarrow 
\left ( \vcenter{\hbox{ \begin{forest}
for tree = {grow'=90,circle, fill, minimum width = 4pt, inner sep = 0pt, l sep = 0, s sep = 13pt, l-=5mm}
[{},phantom
[{},name = one [[]] ]
[{},name = two  [[][]][[]] ]
[{},name = three, for tree = {fill = teal, edge = {color = teal}}  [[]] ]
]
\draw[black] (one) -- (two);
\draw[teal] (two) -- (three);
\end{forest},
\begin{forest}
for tree = {grow'=90,circle, fill, minimum width = 4pt, inner sep = 0pt, s sep = 13pt, l sep = 0, l-=5mm}
[{},phantom
[ [[]]  ]
]
\end{forest}, 1  }} \right )
\] 
    
    \item  {\sc (Move right) } for any number of branches in the middle tree counting from the right, remove them from the middle tree and add to the right forest as a separate leftmost tree:
    
\[ \left ( \vcenter{\hbox{ \begin{forest}
for tree = {grow'=90,circle, fill, minimum width = 4pt, inner sep = 0pt, l sep = 0, s sep = 13pt, l-=5mm}
[{},phantom
[{},name = one [[]] ]
[{},name = two [[][]] [[]] ]
]
\draw[black] (one) -- (two);
\end{forest},
\begin{forest}
for tree = {grow'=90,circle, fill, minimum width = 4pt, inner sep = 0pt, s sep = 13pt, l sep = 0, l-=5mm}
[{},phantom
[{},  for tree = {fill = red, edge = {color = red}} [ []] [[]]  ]
]
\end{forest}, 1  }} \right )  \longrightarrow 
\left ( \vcenter{\hbox{ \begin{forest}
for tree = {grow'=90,circle, fill, minimum width = 4pt, inner sep = 0pt, l sep = 0, s sep = 13pt, l-=5mm}
[{},phantom
[{},name = one [[]] ]
[{},name = two [[][]] [[]] ]
]
\draw[black] (one) -- (two);
\end{forest},
 1, \begin{forest}
for tree = {grow'=90,circle, fill, minimum width = 4pt, inner sep = 0pt, s sep = 13pt, l sep = 0, l-=5mm}
[{},phantom
[{},  for tree = {fill = teal, edge = {color = teal}} [ []] [[]]  ]
]
\end{forest}  }} \right ) 
\] 
    
\end{itemize}
\end{defi}

Tree-forest-tree triples were shown in \cite{Pol} to be in a bijective correspondence with faces of freehedra, thus labelling them. Moreover, the face labelled $(\mathfrak{F},\mathfrak{T},\mathfrak{G})$ is included in the face labelled $(\mathfrak{F}',\mathfrak{T}',\mathfrak{G}')$ if and only if $(\mathfrak{F},\mathfrak{T},\mathfrak{G})$ can be obtained from $(\mathfrak{F}',\mathfrak{T}',\mathfrak{G}')$ by a sequence of face transformation described above. The picture below illustrates labelling for $\F^2$, with face transformations indicated. \\

\begin{center}
\begin{tikzpicture}
\draw[thin] (0,0) rectangle (6,6);
\filldraw[black] (0,0) circle (1.5pt);
\filldraw[black] (0,6) circle (1.5pt);
\filldraw[black] (6,0) circle (1.5pt);
\filldraw[black] (6,6) circle (1.5pt);
\filldraw[black] (6,3) circle (1.5pt);

\node[scale = 1] (A) at (3,3) {$\left ( \vcenter{\hbox{ 1,
\begin{forest}
for tree = {grow'=90,circle, fill, minimum width = 4pt, inner sep = 0pt, s sep = 13pt, l sep = 0, l-=5mm}
[{},phantom
[{},  for tree = {fill = black, edge = {color = black}} [ []] [[]]  ]
]
\end{forest}, 1  }} \right )$};

\node[scale = 0.9, anchor = east] (1) at (0,3) {$\left ( \vcenter{\hbox{ \begin{forest}
for tree = {grow'=90,circle, fill, minimum width = 4pt, inner sep = 0pt, l sep = 0, s sep = 13pt, l-=5mm}
[{},phantom
[{},name = one [[]][[]] ]
]
\end{forest},
1, 1  }} \right )$};

\node[scale = 0.9, anchor = north] (2) at (3,0) {$\left ( \vcenter{\hbox{ \begin{forest}
for tree = {grow'=90,circle, fill, minimum width = 4pt, inner sep = 0pt, l sep = 0, s sep = 13pt, l-=5mm}
[{},phantom
[{},name = one [[]] ]
]
\end{forest},
\begin{forest}
for tree = {grow'=90,circle, fill, minimum width = 4pt, inner sep = 0pt, l sep = 0, s sep = 13pt, l-=5mm}
[{},phantom
[{},name = one [[]] ]
]
\end{forest}, 1  }} \right )$};
\node[scale = 0.9, anchor = south] (3) at (3,6) {$\left ( \vcenter{\hbox{ 1,  \begin{forest}
for tree = {grow'=90,circle, fill, minimum width = 4pt, inner sep = 0pt, l sep = 0, s sep = 13pt, l-=5mm}
[{},phantom
[{},name = one [[][]] ]
]
]
\end{forest},
 1  }} \right )$};
\node[scale = 0.9, anchor = west] (4) at (6,1.5) {$\left ( \vcenter{\hbox{1, \begin{forest}
for tree = {grow'=90,circle, fill, minimum width = 4pt, inner sep = 0pt, l sep = 0, s sep = 13pt, l-=5mm}
[{},phantom
[{},name = one [[]] ]
]
\end{forest},
\begin{forest}
for tree = {grow'=90,circle, fill, minimum width = 4pt, inner sep = 0pt, l sep = 0, s sep = 13pt, l-=5mm}
[{},phantom
[{},name = one [[]] ]
]
\end{forest}  }} \right )$};
\node[scale = 0.9, anchor = west] (5) at (6,4.5) {$\left ( \vcenter{\hbox{ 1,1,
\begin{forest}
for tree = {grow'=90,circle, fill, minimum width = 4pt, inner sep = 0pt, s sep = 13pt, l sep = 0, l-=5mm}
[{},phantom
[{},  for tree = {fill = black, edge = {color = black}} [ []] [[]]  ]
]
\end{forest}  }} \right )$};

\node[anchor = north east, scale = 0.7] (11) at (0,0) {$\left ( \vcenter{\hbox{ \begin{forest}
for tree = {grow'=90,circle, fill, minimum width = 4pt, inner sep = 0pt, l sep = 0, s sep = 13pt, l-=5mm}
[{},phantom
[{},name = one [[]] ]
[{},name = two [[]] ]
]
]
\draw[black] (one) -- (two);
\end{forest},
1, 1  }} \right )$};
\node[anchor = north west, scale = 0.7] (22) at (6,0) {$\left ( \vcenter{\hbox{ \begin{forest}
for tree = {grow'=90,circle, fill, minimum width = 4pt, inner sep = 0pt, l sep = 0, s sep = 13pt, l-=5mm}
[{},phantom
[{},name = one [[]] ]
]
\end{forest},1,
\begin{forest}
for tree = {grow'=90,circle, fill, minimum width = 4pt, inner sep = 0pt, l sep = 0, s sep = 13pt, l-=5mm}
[{},phantom
[{},name = one [[]] ]
]
\end{forest} }} \right )$};
\node[anchor = west,scale = 0.7] (33) at (6,3) {$\left ( \vcenter{\hbox{ 1,1,\begin{forest}
for tree = {grow'=90,circle, fill, minimum width = 4pt, inner sep = 0pt, l sep = 0, s sep = 13pt, l-=5mm}
[{},phantom
[{},name = one [[]] ]
[{},name = two [[]] ]
]
]
\draw[black] (one) -- (two);
\end{forest}}} \right )$};
\node[anchor = south west, scale = 0.7] (44) at (6,6) {$\left ( \vcenter{\hbox{ 1,1,\begin{forest}
for tree = {grow'=90,circle, fill, minimum width = 4pt, inner sep = 0pt, l sep = 0, s sep = 13pt, l-=5mm}
[{},phantom
[{},name = one [[][]] ]
]
]
\end{forest}}} \right )$};
\node[anchor = south east, scale = 0.7] (55) at (0,6) {$\left ( \vcenter{\hbox{ \begin{forest}
for tree = {grow'=90,circle, fill, minimum width = 4pt, inner sep = 0pt, l sep = 0, s sep = 13pt, l-=5mm}
[{},phantom
[{},name = one [[][]] ]
]
]
\end{forest},
1, 1  }} \right )$};

\draw[->, shorten > = 5pt] (A) -- node[above,xshift = 0.2cm,blue, text width = 1.5cm]{\sc Move left} (1);
\draw[->, shorten > = 5pt] (A) -- node[left,blue,text width = 1.5 cm]{\sc Move left}(2);
\draw[->, shorten > = 5pt] (A) -- node[left,blue]{\sc Merge}(3);
\draw[->, shorten > = 5pt] (A) -- node[below left,blue, text width = 1.5cm]{\sc Move right} (4);
\draw[->, shorten > = 5pt] (A) -- node[above left,blue, text width = 1.5cm]{\sc Move right} (5);

\draw[->] (1) edge [bend right=45] node[right,blue,text width=1cm]{\sc Push apart} (11);
\draw[->] (5) edge [bend left=70] node[right,xshift = 0.2cm,yshift = 0.2cm,blue,text width = 1.5cm]{\sc Push apart} (33);
\draw[->] (4) edge [bend right=70] node[right,blue, text width = 1.5cm]{\sc Move right} (33);
\draw[->] (5) edge [bend right=70] node[right,blue]{\sc Merge} (44);

\draw[->] (3) edge [bend left=45] node[above,blue]{\sc Move right} (44);
\draw[->] (4) edge [bend left=65] node[right,blue,text width = 1.5cm]{\sc Move left} (22);

\draw[->] (3) edge [bend right=45] node[above,blue]{\sc Merge} (55);
\draw[->] (1) edge [bend left=45] node[right,blue]{\sc Merge} (55);

\draw[->] (2) edge [bend right=45] node[below,blue]{\sc Move right} (22);
\draw[->] (2) edge [bend left=45] node[below,blue]{\sc Move left} (11);

\end{tikzpicture}
\end{center}

A forest-tree-forest triple of dimension 0 has an empty tree in the middle, and in the forests every tree has one branch. We now explain how to compute the coordinates of the vertex labelled with such a triple. 

\begin{prop}
\label{vertcoord}
Let $(\mathfrak{F},1,\mathfrak{G})$ be a such a triple. Mark the leaves {\em right to left} by $L_1$ to $L_n$. Then the coordinates of the corresponding vertex are:

$$v_1 = \begin{cases} 2 & \text{if right forest is nonempty} \\
0 & \text{if right forest is empty}
\end{cases}$$
 and for $i>1$
$$ v_i = \begin{cases} 2 & \text{if } L_{i-1}\text{ and }L_{i} \text{ are on the same tree} \\
1 & \text{if } L_{i-1}\text{ and }L_{i} \text{ are on different trees in the right forest} \\
0 & \text{if } L_{i-1} \text{ and }L_{i} \text{ are on different trees in the left forest/different forests} \end{cases} $$

\end{prop}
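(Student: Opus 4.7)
The plan is to induct on $n$ following the recursive construction $\F^n = \F^{n-1} \times [0,2]$ with the distinguished facet subdivided. For the base case $n=1$, the vertices $0,2$ of $[0,2]$ match the two dimension-$0$ triples with a single leaf: left forest alone gives $v_1=0$ (right forest empty), and right forest alone gives $v_1=2$, as prescribed by the formula.

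For the inductive step, every vertex of $\F^n$ has the form $(v',v_n)$ where $v'=(v_1,\ldots,v_{n-1})$ is a vertex of $\F^{n-1}$, with $v_n\in\{0,2\}$ always permitted and $v_n=1$ permitted exactly when $v'\in X^{n-1}$, i.e., $v'$ has no zero entries. I claim that the triple label of $(v',v_n)$ is obtained from the triple for $v'$ by one of three local moves on the new leftmost leaf $L_n$: if $v_n=2$, then $L_n$ is attached to the branch carrying $L_{n-1}$, becoming its leftmost leaf; if $v_n=0$, then $L_n$ forms a singleton new leftmost tree of the left forest; if $v_n=1$ (so that, by the inductive hypothesis, the triple for $v'$ has empty left forest), then $L_n$ forms a singleton new leftmost tree of the right forest. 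Each of these three operations preserves the relative configuration of $L_1,\ldots,L_{n-1}$, so by the inductive hypothesis the formula reproduces the first $n-1$ coordinates; a direct inspection of the formula at position $n$ in each of the three cases then yields the prescribed value of $v_n$.

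The main obstacle is justifying the extension rule, i.e., that it matches the triple labelling of \cite{Pol}. My plan is to realise each attachment operation as a controlled descent in the face poset from the top-dimensional label $(1,T_n,1)$, where $T_n$ is the lone middle tree with $n$ single-leaf branches: \textsc{move left} and \textsc{move right} transfer branches of $T_n$ to the left and right forests respectively, while \textsc{push apart} and \textsc{merge} resolve the internal structure of each forest; comparing this descent with the cube-slab decomposition $\F^n = \F^{n-1} \times [0,1] \cup \F^{n-1} \times [1,2]$ identifies the edges parallel to the $[0,2]$ direction (and those inside $X^n$) with the $v_n=0$ versus $v_n=2$ (respectively $v_n=1$) alternatives above. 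As a cross-check, dim-$0$ triples with $n$ leaves and vertices of $\F^n$ both number $2^{n-2}(n+3)$, and the formula is manifestly injective on such triples (the three values of $v_n$ separate the three attachment types), so any consistency verification automatically upgrades the prescription to the desired bijection.
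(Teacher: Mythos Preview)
Your approach is genuinely different from the paper's. The paper does not give a self-contained argument: it simply observes that the claim follows by composing two results from \cite{Pol} --- the bijection between forest-tree-forest triples and ``nice expressions'' (Section~5 there) with the coordinate description of nice expressions as faces of the subdivided cube (below Definition~4.6 there) --- and explicitly declines to spell out the details. Your proposal instead inducts directly on the recursion $\F^n = \F^{n-1} \times [0,2]$ with the subdivided distinguished facet, avoiding nice expressions altogether; this is in fact the route the paper says it would \emph{prefer} but does not carry out.

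The trade-off: the paper's route is a one-line citation but passes through precisely the auxiliary object it wishes to eliminate; your route is elementary and internally consistent (the three attachment moves for $L_n$ visibly produce the three values of $v_n$, and the counting cross-check $2^{n-2}(n+3)$ together with injectivity is correct), but its validity as a proof of \emph{this} proposition hinges on your third paragraph --- namely, that your inductive extension rule agrees with the labelling already fixed in \cite{Pol}. That paragraph is a plan rather than a proof: you would still need to exhibit concretely how the face-poset descent from $(1,T_n,1)$ interacts with the product decomposition $\F^{n-1}\times[0,2]$ and the splitting of $X^{n-1}\times[0,2]$, and in particular to argue that the last coordinate under the labelling of \cite{Pol} is governed by exactly your three cases. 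Without that verification you have constructed \emph{a} coherent bijection satisfying the formula, but not shown it coincides with \emph{the} bijection the proposition refers to.
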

\begin{proof}
The result follows by combining main isomorphism in Section 5 in \cite{Pol} (relating forest-tree-forest to ``nice expressions") and the discussion below Definition 4.6 in \cite{Pol} (relating ``nice expressions" to faces in the subdivided cube). We leave the details to an interested reader, since our current goal is to completely avoid ``nice expressions".
\end{proof}

 
As a consequence of the proposition above, the set of $0$-dimensional forest-tree-forest triples inherits a partial order from the cube: corresponding vertices are compared coordinatewise.\\

We conclude by describing minimal and maximal vertex of an arbitrary face.

\begin{prop}
\label{minmax}
Let $(\mathfrak{F},\mathfrak{T},\mathfrak{G})$ be a face. 
\begin{itemize}
    \item Its minimal vertex $V_{\min}$ is obtained by moving $\mathfrak{T}$ to the left and pushing apart at all mid-branch spaces.
    \item Its maximal vertex $V_{\max}$ is obtained by moving $\mathfrak{T}$ to the right and merging at all mid-branch spaces.
\end{itemize}

\begin{proof}
Let $V$ be an arbitrary vertex of $(\mathfrak{F},\mathfrak{T},\mathfrak{G})$. We use the vertex coordinates from Proposition \ref{vertcoord} and show that for every $i$,
$$v_i(V_{\min}) \leq v_i(V) \leq v_i(V_{\max})$$ 
First consider the coordinate $v_1$. If $\mathfrak{G}$ is nonempty, then $v_1 = 2$ for any vertex of $(\mathfrak{F},\mathfrak{T},\mathfrak{G})$. If $\mathfrak{G}$ and $\mathfrak{T}$ are both empty, then $v_1 = 0$ for all vertices of $(\mathfrak{F},\mathfrak{T},\mathfrak{G})$. Now if $\mathfrak{G}$ is empty and $\mathfrak{T}$ is not, then $v_1(V_{\min}) = 0$ since the right forest remains empty after moving $\mathfrak{T}$ to the left, and $v_1(V_{\max}) = 2$ since the right forest becomes nonempty after moving $\mathfrak{T}$ to the right.\\

Now consider the coordinate $v_i$ for $i>1$. We go through the possibilities for the leaves $L_{i-1}$ and $L_{i}$ in $(\mathfrak{F},\mathfrak{T},\mathfrak{G})$.

\begin{itemize}
    \item $L_{i-1}$ and $L_{i}$ are on the same branch (does not matter of which tree). Then 
    $$2 = v_i(V_{\min}) = v_i(V) = v_i(V_{\max})$$ 
    \item $L_{i-1}$ and $L_{i}$ are on different branches of the same tree in $\mathfrak{F}$. Then pushing these branches apart forces the value $0$, and merging these branches forces the value $2$, thus
    $$0 = v_i(V_{\min}) \leq  v_i(V) \leq v_i(V_{\max}) = 2$$ 
    \item $L_{i-1}$ and $L_{i}$ are are on different branches of the same tree in $\mathfrak{T}$. Then moving $\mathfrak{T}$ to the left and pushing these branches apart forces the value $0$, and moving $\mathfrak{T}$ to the right and merging these branches forces the value $2$, thus
    $$0 = v_i(V_{\min}) \leq  v_i(V) \leq v_i(V_{\max}) = 2$$ 
    \item $L_{i-1}$ and $L_{i}$ are are on different branches of the same tree in $\mathfrak{G}$. Then pushing these branches apart forces the value $1$, and merging these branches forces the value $2$, and the value $0$ (corresponding to different trees in left/different forests) is impossible, thus
    $$1 = v_i(V_{\min}) \leq  v_i(V) \leq v_i(V_{\max}) = 2$$ 
    \item $L_{i-1}$ and $L_{i}$ are on different trees in $\mathfrak{F}$. Then 
    $$0 = v_i(V_{\min}) =  v_i(V) = v_i(V_{\max}) $$ 
    \item $L_{i-1}$ is in $\mathfrak{F}$ and $L_i$ is in $\mathfrak{T}$. Then moving $\mathfrak{T}$ to the left makes them belong to different trees of the left forest, and forcing $\mathfrak{T}$ to the right makes them belong to different trees in different forests, anyways forcing the value 0:
    $$0 = v_i(V_{\min}) =  v_i(V) = v_i(V_{\max}) $$ 
    \item $L_{i-1}$ is in $\mathfrak{F}$ and $L_i$ is in $G$ (then $\mathfrak{T}$ has to be empty), then
    $$0 = v_i(V_{\min}) =  v_i(V) = v_i(V_{\max}) $$ 
    \item $L_{i-1}$ is in $\mathfrak{T}$ and $L_i$ is in $\mathfrak{G}$. Then moving $\mathfrak{T}$ to the left forces the value $0$ and moving $\mathfrak{T}$ to the right forces the value $1$, and the value 0 is impossible, thus
    $$0 = v_i(V_{\min}) \leq  v_i(V) \leq v_i(V_{\max}) = 1 $$ 
    \item $L_{i-1}$ and $L_i$ are on different trees in $\mathfrak{G}$.
    $$0 = v_i(V_{\min}) =  v_i(V) = v_i(V_{\max}) $$ 
\end{itemize}
\end{proof}

\end{prop}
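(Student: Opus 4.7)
The plan is to reduce the statement to a coordinate-wise inequality via Proposition \ref{vertcoord}. The first step is to check that the two triples described for $V_{\min}$ and $V_{\max}$ are genuinely $0$-dimensional vertices of $(\mathfrak{F},\mathfrak{T},\mathfrak{G})$: after a \textsc{move left} or \textsc{move right} the middle tree is empty, and after exhaustive \textsc{push apart} (resp.\ \textsc{merge}) every tree of the two resulting forests has a single branch, which is precisely the description of a vertex given just before Proposition \ref{vertcoord}.

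The main work is then to prove, for every vertex $V$ of the face,
$$v_i(V_{\min})\;\leq\;v_i(V)\;\leq\;v_i(V_{\max}) \qquad \text{for each }i.$$
By Proposition \ref{vertcoord}, the value $v_i$ depends only on the relative position of the two consecutive leaves $L_{i-1}$ and $L_i$ (and for $i=1$ only on whether the right forest is empty). So one runs a case analysis over the possible locations of $L_{i-1},L_i$ inside $(\mathfrak{F},\mathfrak{T},\mathfrak{G})$: same branch, distinct branches of a common tree lying in $\mathfrak{F}$, $\mathfrak{T}$, or $\mathfrak{G}$, distinct trees of the same forest, or straddling two different components. In each case one reads off $v_i(V_{\min})$ and $v_i(V_{\max})$ from the recipes for the extremal vertices and bounds $v_i(V)$ directly from the local geometry, using that \textsc{merge} and \textsc{push apart} act only within a single tree while \textsc{move left}/\textsc{right} only redistribute leaves of $\mathfrak{T}$. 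The initial coordinate $v_1$ is handled separately by splitting on the emptiness of $\mathfrak{G}$ and $\mathfrak{T}$: if $\mathfrak{G}\neq\emptyset$ every vertex has $v_1=2$, if $\mathfrak{G}=\mathfrak{T}=\emptyset$ every vertex has $v_1=0$, and the remaining case $\mathfrak{G}=\emptyset$, $\mathfrak{T}\neq\emptyset$ realizes the full range through the choice of direction for $\mathfrak{T}$.

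The hardest step will be the bridging cases in which one of the leaves lies in $\mathfrak{T}$, because \textsc{move left} and \textsc{move right} reassign leaves of $\mathfrak{T}$ to \emph{different} forests, changing the partition invariants on which Proposition \ref{vertcoord} depends. A typical subtle case is $L_{i-1}\in\mathfrak{T}$, $L_i\in\mathfrak{G}$: moving $\mathfrak{T}$ left places these leaves in different forests (so $v_i(V_{\min})=0$), while moving $\mathfrak{T}$ right places them on distinct trees of the right forest (so $v_i(V_{\max})=1$), and the value $2$ is genuinely unreachable because no face transformation can merge leaves across a forest boundary. Once this careful bookkeeping is carried out for all of the mixed cases, the coordinate-wise comparison falls out uniformly and the proposition is proved.
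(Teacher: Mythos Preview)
Your proposal is correct and follows essentially the same route as the paper's own proof: reduce to the coordinate description of Proposition~\ref{vertcoord}, treat $v_1$ separately by the emptiness of $\mathfrak{G}$ and $\mathfrak{T}$, and then run the case analysis on the positions of $L_{i-1}$ and $L_i$ (same branch, different branches of a tree in $\mathfrak{F}$, $\mathfrak{T}$, or $\mathfrak{G}$, different trees, or straddling components). The only addition in your write-up is the explicit preliminary check that the two described triples are genuine $0$-dimensional vertices, which the paper leaves implicit.
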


\section{Shortness}
We now explain {\em shortness}  --  a combinatorial property that is later proved to be enjoyed by freehedra. Shortness implies that a certain colored DG-operad associated to a polytope is augmented -- and, conjecturally, self-dual. \\

As explained above, freehedra come with a natural partial order on their vertices. This is axiomatized in the definition of a directed polytope (\cite{AP}).

\begin{defi}
A polytope $P$ is {\it directed} if its 1-skeleton is an oriented graph with no cycles, one source and one sink, and the same holds for every face of $P$.
\end{defi}

Edge directions make vertices of $P$ into a partially ordered set, and this extends to a relation on faces.

\begin{defi}
Let $F_1$ and $F_2$ be two faces of a directed polytope $P$. Then $F_1 \leq F_2$ if $\min F_1 \leq \max F_2$.
\end{defi}

This allows speaking of face chains, and their {\it excess}.

\begin{defi} A sequence of faces $(F_1, \ldots, F_n)$ is a {\it face chain} in a face $F$ if $F_i \subset F$ for any $i$ and $F_1 \leq \ldots \leq F_n$. The {\it excess} of $(F_1, \ldots, F_n)$ in $F$ is $(\dim F-1) - \sum (\dim F_i-1)$. The set of face chains in $F$ of length $n$ with excess $l$ is denoted by $\mathfrak{fc}_l(F,n)$. \end{defi}

We are now in position to define shortness. \\

\begin{defi}
A directed polytope $P$ is called short if nontrivial chains have positive excesses.
\end{defi}

Examples of short polytopes include simplices and cubes with their standard directions. Nonexamples include associahedra with Tamari directions and permutahedra with weak Bruhat directions.\\

To an directed polytope, no matter short or not, one associates a colored DG-operad via the following construction.

\begin{defi}
The set of colors is given by all faces of $P$. The operation spaces are 

$$O_P(F_1, \ldots, F_n; F) = \begin{cases} k[l-n+1] & (F_1, \ldots, F_n) \in \mathfrak{fc}_l(F,n) \\ 0 & \text{else} \end{cases} $$

Thus the total grading of every operation is the excess of the corresponding chain. The composition maps are either $k[m] \simeq k[m]$ or $0  \to k[m]$.
\end{defi}

The correctness of the above definition is proved in \cite{AP}, Theorem 2.6. \\

In terms of $O_P$, shortness means that $O_P$ is augmented: with respect to the total grading, its degree zero part consists of identity operations, and all other operations are of higher degrees. \\

The reason to be interested in short polytopes is the following conjecture from \cite{AP}.

\begin{con}
\label{conj}
For short polytopes their operads are Koszul and Koszul self-dual.
\end{con}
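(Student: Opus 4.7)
The plan is to extract a quadratic presentation of $O_P$ and compute its quadratic dual. By shortness, the augmentation ideal of $O_P$ sits in strictly positive total degree, so a minimal generating set lives in degree one. These generators correspond to face chains of excess one: either arity-one operations coming from codimension-one subface inclusions $F' \hookrightarrow F$, or higher-arity chains $(F_1,\dots,F_n)\subset F$ with $\sum(\dim F_i - 1) = \dim F - 2$. The quadratic relations are the identities among all two-step operadic compositions of these generators; the composition rule (isomorphism onto the merged-chain component, else zero) makes these relations combinatorial and straightforward to enumerate.

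The second step is to prove Koszulness via a PBW/Gr\"obner basis argument. I would choose a linear extension of the face poset refining the directed partial order and use it to order operadic tree monomials, then check that the quadratic relations form a Gr\"obner basis; by the Dotsenko--Khoroshkin criterion this implies Koszulness. Shortness should be exactly what makes this rewriting terminate: the excess is a nonnegative function that, combined with chain length, provides a well-founded complexity on tree monomials, whereas for non-short polytopes like the associahedron the excess can be zero on nontrivial chains and rewriting loops appear. Verifying the simplex and cube by hand should confirm the approach and reveal the general pattern before tackling the freehedron via the forest-tree-forest description from Section~1.

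For self-duality, one has to construct an explicit isomorphism $O_P \simeq O_P^{!}$ matching degree-one generators with degree-one generators and degree-two relations with degree-two relations. The hardest part is precisely this matching: there is no apparent polytope-level symmetry implementing a combinatorial bijection on face chains, and for asymmetric examples such as the freehedron any construction will have to exploit the finer combinatorics --- for freehedra, the forest-tree-forest labelling is the natural candidate, because \textsc{Merge}/\textsc{Push apart} and \textsc{Move left}/\textsc{Move right} already come in dual pairs that look like a self-duality of the generators. I would therefore first verify the full Koszul package on cube and simplex to see what shape the duality isomorphism must take, and then try to promote the \textsc{Merge}$\leftrightarrow$\textsc{Push apart} and \textsc{Move left}$\leftrightarrow$\textsc{Move right} involution on face transformations to the sought isomorphism at the operad level.
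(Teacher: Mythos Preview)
The statement you are attempting to prove is labeled \texttt{con} in the paper and is referred to afterwards as ``Conjecture~\ref{conj}''. The paper does \emph{not} prove it; it is stated as an open conjecture, and the paper only uses its numeric consequence (the involutive Poincar\'e--Hilbert series) as motivation. There is therefore no proof in the paper to compare your proposal against.

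As for the proposal itself: what you have written is a research outline, not a proof. Each of the three steps contains a genuine gap. First, you assert that $O_P$ is quadratic (generated in degree one with relations in degree two), but this is not obvious from the definition and you give no argument; shortness only tells you the augmentation ideal lives in positive degree, not that degree-one elements generate or that relations are quadratic. Second, the Gr\"obner/PBW step is entirely programmatic: ``choose an order and check'' is not a proof, and your heuristic that shortness prevents rewriting loops is suggestive but unsubstantiated. Third, for self-duality you yourself flag the matching of generators and relations as ``the hardest part'' and offer only an analogy with the \textsc{Merge}/\textsc{Push apart} and \textsc{Move left}/\textsc{Move right} pairs; but those are face transformations of freehedra specifically, not structures available for an arbitrary short polytope $P$, so this cannot address the conjecture in its stated generality.

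In short: the paper offers no proof because the statement is conjectural, and your proposal is a plausible plan of attack rather than a proof.
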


We are mainly interested in a certain numeric consequence of self-duality: the involutive property of Poincare-Hilbert series. For a colored DG-operad $\mathcal{P}$ let $T(\mathcal{P})$ be the algebra of formal power series in non-commuting variables $c \in \operatorname{Colors}(\mathcal{P})$, with coefficients in $k((t))$ (this means that the extra invertible variable $t$ is required to commute with everything).  For $V = \bigoplus V_i$ a graded vector space, let $\dim V$ be the Laurent series in $t$ whose coefficient near $t^n$ is the dimension of $V_n$. Let $\operatorname{Tuples}(\mathcal{\mathcal{P}})$ be the set of all ordered tuples of elements in $\operatorname{Colors}(\mathcal{P})$ (repetitions are allowed) - these are possible inputs of operations. For $\overset{\to}{u} \in \operatorname{Tuples}(\mathcal{P})$, let $|\overset{\to}{u}|$ denote the length of the tuple. Notice that any such tuple can be viewed as a non-commutative monomial in $T(\mathcal{P})$.

\begin{defi}
The Poincar\'e-Hilbert endomorphism for $\mathcal{P}$ is an endomorphism $f_\mathcal{P}$ of $T(\mathcal{P})$, which is defined on generators by 
$$f_\mathcal{P}(c) = \sum_{ \overset{\to}{u} \in \operatorname{Tuples}(\mathcal{P})} \left(t^{|\overset{\to}{u}|-1}\right) \left[\dim \mathcal{P}(\overset{\to}{u};c) \right] \overset{\to}{u} $$
and extended to an algebra map by multiplication since the algebra in question is free.
\end{defi}

\begin{fact}
\label{f}
Let $I$ be an endomorphism of $T(\mathcal{P})$ sending each variable $c$ to $-c$, and $t$ to $-t$. For a colored operad $\mathcal{P}$ and its Bar-dual cooperad $\operatorname{Bar}(\mathcal{P})$, we have 
$ f_\mathcal{P} \circ I \circ f_{\operatorname{Bar}(\mathcal{P})} \circ I = \operatorname{Id}$.
\end{fact}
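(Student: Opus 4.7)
The plan is to exploit the fact that, as a graded cooperad (ignoring the bar differential), $\operatorname{Bar}(\mathcal{P})$ is the cofree conilpotent cooperad on the shifted augmentation ideal $\mathcal{P}^+[1]$, where $\mathcal{P}^+$ is the kernel of the augmentation. Since $f_{\operatorname{Bar}(\mathcal{P})}$ records dimensions of operation spaces rather than of any cohomology, the bar differential is irrelevant to the calculation; all the combinatorics lies in the underlying cofree structure.

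First I would translate the cofree description into a fixed-point equation for $f_{\operatorname{Bar}(\mathcal{P})}$. Every operation in $\operatorname{Bar}(\mathcal{P})(\vec{u};c)$ is either the identity (when $|\vec{u}|=1$ and $\vec{u}=c$) or decomposes uniquely into a root vertex labeled by an element of $\mathcal{P}^+$ with further $\operatorname{Bar}(\mathcal{P})$-operations grafted at its children. Writing $f_{\mathcal{P}^+}(c) := f_\mathcal{P}(c) - c$ for the ``reduced'' endomorphism, and tallying $t$-degrees coming from arities and from the shift $[1]$ at each internal vertex, one obtains a recursion for $f_{\operatorname{Bar}(\mathcal{P})}$ of the schematic form
\[
 f_{\operatorname{Bar}(\mathcal{P})}(c) \;=\; c \;+\; \bigl(\text{signed substitution of } f_{\mathcal{P}^+} \text{ inside } f_{\operatorname{Bar}(\mathcal{P})}\bigr),
\]
where the signs come from the $[1]$-shifts at internal vertices.

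Next I would conjugate by $I$. Since $I$ is an involutive ring endomorphism sending $t \to -t$ and each color to its negative, the sign produced by the shift $[1]$ at each internal vertex is cancelled by the sign introduced by $I$. Setting $h := I \circ f_{\operatorname{Bar}(\mathcal{P})} \circ I$, the transformed recursion collapses to
\[
 h(c) \;+\; f_{\mathcal{P}^+}\bigl(h(c)\bigr) \;=\; c.
\]
Since $f_\mathcal{P}(x) = x + f_{\mathcal{P}^+}(x)$ and both are algebra endomorphisms of $T(\mathcal{P})$, this reads precisely $f_\mathcal{P}(h(c)) = c$ on each generator $c$, hence $f_\mathcal{P} \circ h = \operatorname{Id}$ on all of $T(\mathcal{P})$, which is the claimed identity.

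The main obstacle is sign bookkeeping: both the shift $[1]$ in the bar construction and the involution $I$ introduce signs depending on the number of internal vertices of the trees being enumerated, and these must be lined up so that all higher-tree contributions cancel and only the compositional-inversion relation survives. Aside from this careful accounting, the statement is the colored-operad analogue of the classical Ginzburg--Kapranov generating-series identity relating an operad to its bar dual.
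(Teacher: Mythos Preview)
The paper does not actually prove this statement: it is presented as a \texttt{Fact} with no accompanying proof, evidently quoted as a known (colored) analogue of the Ginzburg--Kapranov generating-series identity. There is therefore nothing in the paper to compare your argument against.

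As for the approach itself, your outline is the standard one and is sound in spirit: use that $\operatorname{Bar}(\mathcal{P})$ is, as a graded object, the cofree conilpotent cooperad on $\mathcal{P}^+[1]$; read off a tree recursion for $f_{\operatorname{Bar}(\mathcal{P})}$ by peeling off the root vertex; conjugate by $I$ to turn the shift-induced signs into the alternating signs of a compositional inverse; and conclude $f_\mathcal{P}\circ(I\circ f_{\operatorname{Bar}(\mathcal{P})}\circ I)=\operatorname{Id}$. Two points deserve more care than you give them. First, you should say explicitly what $f_{\operatorname{Bar}(\mathcal{P})}$ means, since the paper's Definition is phrased for operads, not cooperads; the intended reading is via the dimensions of the cooperation spaces (equivalently, the linear dual operad), and you are implicitly using this. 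Second, the sign accounting you flag as ``the main obstacle'' is the entire content of the argument: you need to check that the power of $t$ attached to a tree with $k$ internal vertices and leaf tuple $\vec{u}$ in $f_{\operatorname{Bar}(\mathcal{P})}$ is exactly $t^{|\vec{u}|-1+k}$ (the extra $k$ from the shifts), so that conjugation by $I$ converts this into $(-1)^k t^{|\vec{u}|-1}$, producing precisely the alternating sum that collapses the recursion to $f_\mathcal{P}\circ h=\operatorname{Id}$. Until that bookkeeping is written out, your proposal is a correct plan rather than a proof.
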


For $\mathcal{P}$ being $O_P$ as above for a short $P$, Conjecture \ref{conj} and Fact \ref{f} imply that teh cellular complex of $P$ is an integrated $A_\infty$-coalgebra in the sense of the following definition:

\begin{defi}
The structure of an integrated $A_\infty$-coalgebra
 on a graded vector space $V$ is the data of $t$-linear involution on the completed tensor algebra $\widehat{T}(V[1])$ 
\end{defi}

\section{Proof of shortness}
We now prove the main result of the present paper.

\begin{theo}
Freehedra are short. 
\end{theo}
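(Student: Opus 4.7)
The plan is a coordinate-by-coordinate analysis powered by Propositions \ref{vertcoord} and \ref{minmax} together with condition $\bigstar$. Given a face $F=(\mathfrak{F},\mathfrak{T},\mathfrak{G})$ of $\mathcal{F}^n$ and a nontrivial face chain $(F_1,\ldots,F_k)$ in $F$, the goal is to establish $\sum_{i=1}^k\dim F_i\le\dim F+k-2$, which is equivalent to the excess being strictly positive.

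First I will rewrite the left-hand side coordinate-wise. For each $c\in\{1,\ldots,n\}$, the chain condition forces
$$v_c(\min F_1)\le v_c(\max F_1)\le v_c(\min F_2)\le\cdots\le v_c(\max F_k)$$
to be a non-decreasing sequence in $\{v_c(\min F),\ldots,v_c(\max F)\}\subseteq\{0,1,2\}$. Setting $s_c:=\#\{i:v_c(\min F_i)<v_c(\max F_i)\}$, one has $\sum_i\dim F_i=\sum_c s_c$. Since each within-$F_i$ strict ascent consumes a unit of the range $v_c(\max F)-v_c(\min F)\le 2$, we get the coarse bound $\sum_c s_c\le|\max F-\min F|_1$. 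A direct computation using the leaf-pair analysis of Proposition \ref{minmax} shows $|\max F-\min F|_1=\dim F+d_2(F)$, where $d_2(F)$ counts the mid-branch spaces inside $\mathfrak{F}$ and $\mathfrak{T}$, with a further $+1$ contribution when $\mathfrak{T}\ne\emptyset$ and $\mathfrak{G}=\emptyset$.

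The surplus $d_2(F)$ must then be cancelled, and this is where $\bigstar$ is essential. Because $\bigstar$ forbids $v_1=1$, the sequence at coordinate $1$ lies in $\{0,2\}$, so $s_1\le 1$ always, which saves one unit whenever coordinate $1$ is of type $2$. More importantly, whenever $s_c=2$ at a type-$2$ coordinate $c>1$, the intermediate value $v_c=1$ is attained at some $\max F_i$ and at some $\min F_j$ with $i<j$; condition $\bigstar$ then forces $v_{c'}\ne 0$ at both of these vertices for every $c'<c$, which in combination with the chain inequality in coordinate $c'$ pins coordinate $c'$ on the whole stretch $i\le l\le j$ and removes its contribution to $s_{c'}$ there.

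The hard part will be the bookkeeping: packaging the $\bigstar$-induced pinnings into a clean injection from the set of doubled coordinates $\{c:s_c=2\}$ into an equally large set of earlier coordinates that lose contribution as a consequence, so that the gained savings exactly absorb the $d_2(F)$ surplus of the coarse bound and leave $\sum_c s_c\le\dim F+k-2$. I expect the injection to be built by a left-to-right sweep, matching each doubled coordinate with the nearest earlier type-$2$ coordinate whose pinning window overlaps the index range $[i,j]$; the leaf-pair analysis underlying Proposition \ref{minmax} will organize the case analysis by telling us which of $\mathfrak{F},\mathfrak{T},\mathfrak{G}$ each type-$2$ coordinate belongs to. The boundary case $k=1$ reduces to $\dim F_1<\dim F$ for $F_1\ne F$, which is immediate.
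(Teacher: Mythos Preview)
Your approach is genuinely different from the paper's. The paper introduces an auxiliary integer-valued function $D$ on vertices (namely $D(v)=a+b-1$, the number of trees in the two forests minus one), verifies the single inequality $D(\min G)-D(\max G)\ge\dim G-1$ for every face $G$, and then telescopes along a connected chain; the strictness step and the passage to arbitrary faces are handled separately by a short argument about the middle tree and by the product decomposition of faces into smaller freehedra and cubes. Your proposal instead attempts a direct coordinate-wise count of ascents.

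There are two substantive gaps. First, your opening step asserts that the chain condition yields
\[
v_c(\max F_i)\le v_c(\min F_{i+1}),
\]
but the relation defining a face chain is $\min F_i\le\max F_{i+1}$, not $\max F_i\le\min F_{i+1}$; the monotone sequence you write down is simply not forced. The paper gets around this by first reducing to \emph{connected} chains with $\max F_i=\min F_{i+1}$; you would need that reduction too, and it is not automatic from the definition as stated.

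Second, and more seriously, the entire weight of the argument rests on the promised ``injection'' from doubled coordinates to pinned earlier coordinates, and this injection is neither constructed nor shown to produce the right count. The $\bigstar$ constraint at a doubled coordinate $c$ tells you that at the two intermediate vertices where $v_c=1$ one has $v_{c'}\ge 1$ for all $c'<c$; but in a monotone sequence this only says the $0\!\to\!1$ ascent in $c'$ (if any) occurred \emph{earlier}, not that $s_{c'}$ is reduced --- $c'$ can still ascend $0\!\to\!1$ before your window and $1\!\to\!2$ after it, keeping $s_{c'}=2$. Nor do I see how your bookkeeping introduces the chain length $k$: the target is $\sum_c s_c\le\dim F+k-2$, the coarse bound is $\dim F+d_2(F)$, and the injection as sketched at best saves one unit per doubled coordinate, a quantity with no evident relation to $d_2(F)-k+2$. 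Until the injection is actually built and its output tallied, this is an outline rather than a proof, and the sketch as given does not look like it closes. The paper's sup-dimensional function $D$ is precisely the device that replaces this delicate bookkeeping by a one-line telescoping identity; you may find it easier to rediscover such a potential function than to push the injection through.
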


To prove it, we introduce the following technical notion.

\begin{defi}
A $\mathbb{N}$-valued function $D$ on the vertices of $P$ is called {\em sup-dimensional}, if is satisfies the following conditions:
\begin{itemize}
    \item $D(\min P) = \dim P -1$ and $D(\max P) = 0$
    \item for every face $F \subset P$, $D(\min P) - D(\max P) \geq \dim F - 1$
\end{itemize}
\end{defi}

In the example of the standard simplex, the sup-dimensional function is simply the number of the vertex. Indeed, an $m$-dimensional facet has the numbers of its minimum and maximum vertices differ at least by $m$; sometimes by a number a lot larger than $m$, e.g. consider the edge connecting $0$ to $n$. \\

The reason to consider sup-dimensional functions is the following lemma.

\begin{lem}
For any ordered $P$ polytope equipped with a sup-dimensional function $D$, the excess of nontrivial chains within top-dimensional face is nonnegative.
\end{lem}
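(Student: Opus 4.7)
The approach is to apply the two axioms of the sup-dimensional function $D$ face-by-face, and then to telescope the resulting $D$-sum along the chain. First, for each $F_i$ in the chain, the second axiom of sup-dimensionality gives
\[
\dim F_i - 1 \leq D(\min F_i) - D(\max F_i).
\]
Summing over $i$ and using the first axiom $D(\min P) - D(\max P) = \dim P - 1$, the excess $(\dim P - 1) - \sum(\dim F_i - 1)$ is nonnegative provided
\[
\sum_{i=1}^n \bigl(D(\min F_i) - D(\max F_i)\bigr) \leq D(\min P) - D(\max P).
\]

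Next, I would observe that $D$ must be monotone non-increasing along the partial order on vertices: applying the sup-dimensional inequality to a single directed edge $e$ yields $D(\min e) \geq D(\max e)$, and transitivity extends this to all of the vertex poset. As a consequence, the global endpoint bounds $D(\min F_1) \leq D(\min P)$ and $D(\max F_n) \geq D(\max P)$ hold, and the chain relation $\min F_i \leq \max F_{i+1}$ translates to $D(\min F_i) \geq D(\max F_{i+1})$.

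The telescoping step then proceeds by writing
\[
\sum_{i=1}^n \bigl(D(\min F_i) - D(\max F_i)\bigr) = \bigl(D(\min F_1) - D(\max F_n)\bigr) + \sum_{i=1}^{n-1} \bigl(D(\min F_{i+1}) - D(\max F_i)\bigr),
\]
and using monotonicity of $D$ together with the chain inequalities to show each bracket on the right is absorbed into the global drop $D(\min P) - D(\max P) = \dim P - 1$.

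I expect this telescoping to be the main obstacle, since the stated chain relation only directly controls $D(\min F_i) \geq D(\max F_{i+1})$ rather than the complementary pairing appearing in the cross-terms. My plan is to proceed by induction on the chain length $n$: the base case $n = 1$ reduces to $\dim F_1 - 1 \leq \dim P - 1$, which is immediate from $F_1 \subseteq P$; the inductive step shrinks the chain by removing $F_n$ and uses monotonicity of $D$ to absorb the removed contribution into the running bound on the truncated chain. If a direct induction proves awkward, a fallback is to strengthen the inductive hypothesis to the sharper statement $\sum_{i=1}^n (\dim F_i - 1) \leq D(\min F_1) - D(\max F_n)$, from which the desired global bound follows via the endpoint monotonicity of $D$.
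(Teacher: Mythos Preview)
Your telescoping strategy and your use of the sup-dimensional axioms are exactly right, and the monotonicity of $D$ along the vertex order is a correct observation. The obstacle you flag in the cross-terms is genuine given the chain relation as you have written it, and your proposed induction does \emph{not} get around it: the inductive step for the strengthened hypothesis $\sum_{i\le k}(\dim F_i-1)\le D(\min F_1)-D(\max F_k)$ still requires $D(\min F_k)\le D(\max F_{k-1})$, i.e.\ $\max F_{k-1}\le \min F_k$, which is precisely the complementary inequality you noted the relation $\min F_{k-1}\le\max F_k$ does not supply. So the fallback plan fails for the same reason the direct telescoping did.

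The paper resolves this by a different device: it first replaces an arbitrary chain by a \emph{connected} one with $\max F_i=\min F_{i+1}$, inserting a directed edge-path between consecutive faces (and implicitly also from $\min P$ to $\min F_1$ and from $\max F_n$ to $\max P$). Each inserted edge contributes $\dim-1=0$, so the excess is unchanged. For a connected chain from $\min P$ to $\max P$ the telescoping is exact---the cross-terms vanish---and the sum collapses to $D(\min P)-D(\max P)=\dim P-1$. Note that this reduction only makes sense if the chain condition is read as $\max F_i\le\min F_{i+1}$; the printed definition in the paper has $\min$ and $\max$ reversed, which is the source of your difficulty. With that corrected reading your monotonicity argument already handles the cross-terms directly (each $D(\min F_{i+1})-D(\max F_i)\le 0$), the endpoint bounds give $D(\min F_1)-D(\max F_n)\le\dim P-1$, and no induction is needed. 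Your route then differs from the paper's only in trading the connected-chain reduction for the explicit monotonicity of $D$.
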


\begin{proof}
We first observe that shortness can be only checked for {\em connected} chains $(F_1, \ldots, F_n)$ with $\max F_i = \min F_{i+1}$. Indeed, an arbitrary chain can be extended to a connected chain by adding the edge-path between $\max F_i$ and $\min F_{i+1}$, without affecting its excess. Let $(F_1, \ldots, F_n)$ be such a chain. We name the vertices: set $v_0 = \min F_1$, $v_i = \max F_i = \min F_{i+1}$ for $0 < i < n$, and $v_n = \max F_n$. By assumption of $D$ being sup-dimensional, we have 
$$ D(v_{i}) - D(v_{i-1}) \geq \dim F_i - 1 $$
Adding this up for all values of $i$, we get 
$$ D(v_n) - D(v_0) \geq \sum_{i=1}^n (\dim F_i-1)$$
but $D(v_n) - D(v_0) = \dim P - 1$, thus proving the statement.
\end{proof}

Note that the Lemma does not imply as much as shortness of $P$. Indeed, shortness has to be checked within all faces not just the top-dimensional one. Additionally, the Lemma does not outrule chains of zero excess, which are not allowed in a short polytope. To outrule them, one needs to prove that every connected face chain from $\min P$ to $\max P$ includes at least one face $F$ for which the inequality $D(\min P) - D(\max P) \geq \dim F - 1$ is strict. \\

We now construct a sup-dimensional function for freehedra. \\

\begin{defi}
Let $v$ be a vertex in the freehedron $F_{n}$, given by a tree-forest-tree triple $(\mathfrak{F},1,\mathfrak{G})$. Define 
$$ D(v) = a+b-1$$
where $a$ is the number of trees in $\mathfrak{F}$ and $b$ is the number of trees in $\mathfrak{G}$.
\end{defi}

\begin{lem}
The function $D$ defined above is sup-dimensional.
\end{lem}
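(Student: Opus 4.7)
The strategy is direct: for an arbitrary face $F = (\mathfrak{F}, \mathfrak{T}, \mathfrak{G})$, Proposition~\ref{minmax} gives an explicit recipe for $V_{\min}$ and $V_{\max}$ in terms of the operations \textsc{Move}, \textsc{Push apart}, and \textsc{Merge}. I would apply this recipe, read off the numbers of left trees $a$ and right trees $b$ at each of these two vertices, and compare $D(V_{\min}) - D(V_{\max})$ with $\dim F - 1$; then separately check the boundary condition at $F = \F^n$ itself.

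The key observation is that the operations affect the tree count in clean, opposite ways: pushing apart all mid-branch spaces in a forest turns the tree count into the total number of branches (a tree with $k$ branches becomes $k$ single-branch trees), while merging leaves the tree count unchanged (each tree collapses to a single branch). The \textsc{Move} operation relocates the middle tree wholesale, shifting its contribution by one tree between the left and right forests. Parametrize the face by $p, q$ (numbers of trees in $\mathfrak{F}$ and $\mathfrak{G}$), by $E, G$ (total branch counts in $\mathfrak{F}$ and $\mathfrak{G}$), and by $M$ (number of branches in $\mathfrak{T}$, with $M = 0$ when empty). Then $D(V_{\min}) = (E + M) + G - 1$ since one moves $\mathfrak{T}$ to the left and pushes apart everywhere, while $D(V_{\max}) = p + (q + [M \geq 1]) - 1$ since one moves $\mathfrak{T}$ to the right and merges everywhere. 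Their difference is $(E - p) + (G - q) + M - [M \geq 1]$, which when compared to the dimension formula $\dim F = (E - p) + (G - q) + M$ yields the required $D(V_{\min}) - D(V_{\max}) \geq \dim F - 1$, with equality precisely when $\mathfrak{T}$ is nonempty.

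The boundary conditions then follow by specializing to $F = \F^n$ itself, which corresponds to the triple $(\emptyset, T_n, \emptyset)$ where $T_n$ is the single tree with $n$ branches each carrying one leaf; plugging $p = q = E = G = 0$ and $M = n$ into the formulas above gives $D(\min \F^n) = n - 1 = \dim \F^n - 1$ and $D(\max \F^n) = 0$, as required. The only part demanding real attention is the case split on $M = 0$ versus $M \geq 1$ in both the dimension formula and the $a_{\max}$ count, since that is where the structural role of the middle tree enters; it is also the reason the inequality is \emph{strict} exactly when the middle tree is empty, a sharper fact which the subsequent argument for the strict positivity of excesses of nontrivial chains will ultimately need to exploit.
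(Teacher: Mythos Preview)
Your argument is correct and follows essentially the same route as the paper: use Proposition~\ref{minmax} to read off the tree counts at $V_{\min}$ and $V_{\max}$, then compare the resulting difference with $\dim F - 1$. The paper bundles your parameters into $k$ (total number of trees, middle one included) and $d$ (total number of mid-branch spaces), obtaining $D(V_{\min})-D(V_{\max})=d$ directly, but the content is identical; your sharper observation that equality holds precisely when $\mathfrak{T}$ is nonempty (strict inequality when $\mathfrak{T}$ is empty) is also correct.
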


\begin{proof}
For $\min F_n$ we have $n$ one-leave trees in the left forest and no trees in the right forest, so $D(\min F_n) = n-1 = \dim F_n - 1$. For $\max F_n$, we have no trees in the left forest and one tree with $n$ leaves in the right forest, so $D( \max F_n) = 1-1 = 0$. \\

Now let $F$ be an arbitrary face with label $(\mathfrak{F},\mathfrak{T},\mathfrak{G})$. Suppose there are $k$ trees and $d$ mid-branch spaces. If $\mathfrak{T}$ is empty, $\dim F $ is exactly $d$, and if $T$ is nonempty, $\dim F$ is $d+1$. Its minimal vertex and maximal vertex are described in Proposition \ref{minmax}. Passage to minimal vertex requires moving $\mathfrak{T}$ to the right and pushing apart at all mid-branch spaces. Each instance of pushing apart creates a new tree, thus $D( \min F) = k+d$. Passage to maximal vertex requires moving $\mathfrak{T}$ to the right and merging at all mid-branch spaces. Merging neither produces new trees nor unites the old ones, so $D (\max F) = k$. Thus $D (\min F) - D(\max F) = k+d-k = d$, which is indeed greater or equal to $\dim F-1$.
\end{proof}

Note that we get a simple criterion for when the inequality $D(\min F) - D(\max F) \geq \dim F - 1$ is strict: this happens when the forest-tree-forest triple for $\mathfrak{F}$ has a nonempty middle tree. In this case sides differ by 1.\\

We are now left to outrule chains of zero excess, and to treat the faces which are not top-dimensional.

\begin{lem}
Every connected face chain from $\min \F_n$ to $\max \F_n$ includes at least one face $F$ for which the inequality $D(\min F) - D(\max F) \geq \dim F - 1$ is strict.
\end{lem}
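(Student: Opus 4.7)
The plan is to establish the lemma by contradiction, tracking the first coordinate of successive min/max vertices along the chain. By the computation in the preceding lemma, $D(\min F) - D(\max F) = d$ (the number of mid-branch spaces of $F$), while $\dim F - 1$ equals $d-1$ if $\mathfrak{T}$ is empty and $d$ if $\mathfrak{T}$ is nonempty; so the inequality is strict (by exactly $1$) precisely when $\mathfrak{T}$ is empty. It therefore suffices to locate a face $F_i$ in the chain with empty middle tree.

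First I would record the following. By Proposition \ref{vertcoord}, a vertex has first coordinate equal to $2$ iff its right forest is nonempty. For a face $F = (\mathfrak{F}, \mathfrak{T}, \mathfrak{G})$ with $\mathfrak{T}$ nonempty, Proposition \ref{minmax} gives that $\max F$ is obtained by moving $\mathfrak{T}$ to the right, so its right forest is nonempty and its first coordinate is $2$; symmetrically, $\min F$ has right forest equal to $\mathfrak{G}$ (push-aparts do not change emptiness), so the first coordinate of $\min F$ equals $2$ iff $\mathfrak{G}$ is nonempty.

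Now let $w_0 = \min F_1$, $w_i = \max F_i = \min F_{i+1}$ for $0 < i < m$, $w_m = \max F_m$ be the successive vertices, and suppose toward contradiction that every $F_i$ has nonempty middle tree; the case $m=1$ yields only the trivial chain $(\mathcal{F}^n)$, which lies outside the shortness requirement. Then $w_i = \max F_i$ has first coordinate $2$ for all $i \geq 1$, so for $i \geq 2$ the vertex $w_{i-1} = \min F_i$ has first coordinate $2$, forcing $\mathfrak{G}_i$ to be nonempty. In particular $\mathfrak{G}_m$ is nonempty. However, $\max F_m = \max \mathcal{F}^n$ has every coordinate equal to $2$, which by Proposition \ref{vertcoord} forces all $n$ leaves to sit on a single branch of a single tree in the right forest; after moving $\mathfrak{T}_m$ to the right, the right forest contains $\mathfrak{T}_m$ followed by the nonempty $\mathfrak{G}_m$ (at least two trees), and merges within trees cannot unite them --- a contradiction. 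Thus some $F_i$ has empty middle tree, giving strict inequality.

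The crux is pinpointing where the contradiction lands: on the last face $F_m$, where the single-tree structure of the global maximum's right forest is incompatible with the propagated nonemptiness of $\mathfrak{G}_i$ forced by the contradiction assumption.
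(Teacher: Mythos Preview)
Your argument is correct. You correctly identify that, from the computation in the preceding lemma, $D(\min F)-D(\max F)=d$ while $\dim F-1$ equals $d$ or $d-1$ according to whether $\mathfrak{T}$ is nonempty or empty; hence strictness holds precisely when $\mathfrak{T}$ is \emph{empty}. (Note that the sentence in the paper just before this lemma states the criterion with the opposite polarity, which is a slip; your reading agrees with the actual computation.) Your contradiction argument then propagates nonemptiness of the right forest through the first coordinate $v_1$: if every $\mathfrak{T}_i$ were nonempty, then each $\max F_i$ has $v_1=2$, hence each $\min F_{i+1}$ has $v_1=2$, forcing $\mathfrak{G}_{i+1}\neq\emptyset$; at the terminal face this clashes with the fact that $\max\mathcal{F}^n$ has a single tree in its right forest. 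The handling of $m=1$ is fine: the only face with both extremal vertices of $\mathcal{F}^n$ as its min and max is the top cell, which is the excluded trivial chain.

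Your route differs from the paper's. The paper tracks the \emph{number of leaves in the left forest} along the vertex sequence and argues that some step must exhibit a jump, concluding that the corresponding face has leaves in its middle tree. By contrast, you track the binary datum ``right forest nonempty'' (equivalently $v_1=2$) and push the contradiction to the terminal face $F_m$. Your invariant is coarser but sufficient, and your argument is self-contained and avoids the sign/polarity confusions present in the paper's write-up of this lemma. Both approaches ultimately exploit that the global maximum has all leaves on one tree in the right forest; you make this endpoint constraint do the work directly.
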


\begin{proof}
Let $(F_i)$ be a connected face chain, and $(v_i)$ its vertex sequence. In the forest-tree-forest triple for $v_0 = \min F$ all the leaves are in the left forest, and in the forest-tree-forest triple for $v_n = \max F$ all the leaves are in the right forest. Consequently, there exists such an index $i$ that the left forest for $v_i$ has {\em less} leaves then the left forest for $v_{i+1}$, say by $l$ leaves less. Then the face $F_i$ with $\min F_i = v_i$ and $\max F_i = v_{i+1}$ has $l$ leaves in the middle tree, thus satisfies the criterion.
\end{proof}

\begin{cor}
Within the top-dimensional face of $\F_n$, there are no nontrivial chains of zero excess.
\end{cor}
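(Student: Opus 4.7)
The plan is to derive this Corollary as a short synthesis of the two Lemmas just established. Given any face chain $(F_1, \ldots, F_m)$ inside $\F_n$ other than the identity chain $(\F_n)$, I would first, following the reduction used in the proof of the first Lemma of this section, extend it to a connected chain $(G_1, \ldots, G_M)$ running all the way from $\min \F_n$ to $\max \F_n$, by inserting edge-paths in front of $F_1$, behind $F_m$, and between each pair $\max F_i$ and $\min F_{i+1}$. Each inserted edge $e$ contributes $\dim e - 1 = 0$ to the sum $\sum (\dim G_i - 1)$, so the excess of the extended chain agrees with the excess of the original chain.

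Next, I would apply the preceding Lemma to this extended connected chain from $\min \F_n$ to $\max \F_n$: it produces at least one face $G_j$ for which $D(\min G_j) - D(\max G_j) > \dim G_j - 1$ strictly. Telescoping the sup-dimensional inequalities $D(\min G_i) - D(\max G_i) \geq \dim G_i - 1$ along the chain, with a strict inequality at index $j$, then yields
\[ \dim \F_n - 1 \;=\; D(\min \F_n) - D(\max \F_n) \;>\; \sum_{i=1}^M (\dim G_i - 1). \]
This says the extended chain has strictly positive excess, and hence the original chain does too, ruling out the hypothesis of excess zero.

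I do not anticipate any real obstacle here, since the work has already been done in the two preceding Lemmas. The only thing to keep in mind is that the extension must produce a chain to which the preceding Lemma genuinely applies, rather than collapsing back to the trivial singleton $(\F_n)$; but this is exactly what the nontriviality assumption on the original chain buys us.
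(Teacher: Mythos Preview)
Your argument is correct and matches the paper's intended reasoning: the Corollary is stated without proof there, as an immediate consequence of the two preceding Lemmas, and what you have written is precisely that synthesis spelled out. Your extra step of extending the chain not merely to a connected one but all the way from $\min \F_n$ to $\max \F_n$ (by prepending and appending edge-paths) is exactly what is needed so that the second Lemma applies, and your excess bookkeeping is right.
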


Finally we recall that every face of a freehedron is a product of smaller-dimensional freehedra and cubes, compatibly with directions. A face chain in such a product of polytopes obviously corresponds to a product of chains, with its excess being the sum of respective excesses.

\begin{cor}
Within any face, nontrivial chains have positive excess.
\end{cor}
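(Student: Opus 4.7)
The plan is to induct on $n$, using the previous Corollary for the top-dimensional face of $\F^n$ and the product decomposition of every proper face (stated in the paragraph immediately above) to reduce to factors of strictly smaller dimension. The base cases $\F^0$ (a point) and $\F^1$ (an interval, i.e., the $1$-cube) are short by direct inspection.

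In the inductive step I would consider a face $F \subseteq \F^n$ labelled by a triple $(\mathfrak{F}, \mathfrak{T}, \mathfrak{G})$. If $F = \F^n$, the previous Corollary settles it; otherwise I would write $F$ as a product of directed polytopes: one cube per tree in the outer forests $\mathfrak{F}$ and $\mathfrak{G}$ (of dimension one less than the branch count of that tree) and, when $\mathfrak{T}$ is nonempty, a single freehedron whose dimension equals the branch count of $\mathfrak{T}$, necessarily strictly less than $n$ since $F$ is proper. By the inductive hypothesis this smaller freehedron is short, and cubes are short by the examples cited after the definition of shortness.

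I would then apply the indicated product-compatibility: a face chain in $F$ projects to a tuple of face chains in the factors, with excess equal to the sum of the factor excesses. Any nontrivial chain in $F$ is nontrivial in at least one factor, and shortness of that factor provides strict positivity of the corresponding factor excess, while the remaining factors contribute non-negative excess. Summing yields strict positivity of the total excess in $F$.

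The main obstacle will be the structural check underlying this argument: one needs to verify rigorously that the face $(\mathfrak{F}, \mathfrak{T}, \mathfrak{G})$ factorizes as a product of directed polytopes, with min and max vertices respecting the product decomposition (as is already implicit in Proposition \ref{minmax}), and that face chains and their excesses behave as claimed under this product. Both are asserted only informally in the bridge paragraph above the statement, and unpacking them — by reading off the independence of the face transformations acting on each tree and on the middle tree, and checking additivity of $\dim F_i - 1$ across factors on the chains of interest — is the substantive content underlying this corollary.
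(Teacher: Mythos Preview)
Your proposal is correct and follows essentially the same route as the paper: the paper's own argument for this corollary is precisely the bridge paragraph you cite, invoking the product decomposition of a proper face into smaller freehedra and cubes, the correspondence between chains in a product and tuples of factor chains with additive excess, the top-dimensional case just established, and the known shortness of cubes. You have simply made the induction on $n$ explicit and, appropriately, flagged the product-compatibility claim as the place where the residual verification lives.
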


This finishes the proof of freehedra's shortness.

\end{document}